\newcommand{\ignore}[1]{}
\newtheorem{theorem}{Theorem}[section]
\newtheorem{lemma}[theorem]{Lemma}
\newtheorem{corollary}[theorem]{Corollary}
\newcommand{\Proof}[1]
        {
        \noindent
        \emph{Proof #1.}~
        }
\newsavebox{\smallProofsym}                     
\newcommand{\smalleop}[1]
        {
        \mbox{} \hfill #1~~\usebox{\smallProofsym}\!\!\!\!\!\!\
        }
\newcommand{\parag}[1]{\vspace{2mm}

\noindent{\bf #1} }
\newcommand{\RR}{\ensuremath{\mathbb R}}
\newcommand{\FF}{\ensuremath{\mathbb F}}
\newcommand{\pts}{\mathcal P}
\newcommand{\lines}{\mathcal L}
\begin{document}
\pagenumbering{arabic}

\title{Sharp Szemer\'edi-Trotter Constructions in the Plane}

\author{
Larry Guth\thanks{Department of Mathematics, MIT, Cambridge, MA 02139.
{\sl lguth@math.mit.edu}. Supported by Simons Investigator Award}
\and
Olivine Silier\thanks{Caltech, Pasadena, CA 91125.
{\sl osilier@caltech.edu.} Supported by the Associates SURF Fellowship}}

\maketitle

\begin{abstract}
We present a new family of sharp examples for the Szemer\'edi-Trotter theorem.  These are the first examples not based on a rectangular lattice.  We also include an application to the discrete inverse Loomis-Whitney problem.
\end{abstract}

\section{Introduction}

One formulation of the celebrated Szemer\'edi--Trotter theorem \cite{ST83} provides a tight upper bound for the number of $r$-rich lines:

\begin{theorem}{\bf (Szemer\'edi and Trotter)} \label{th:dualSzemTrot}
Let $\pts$ be a set of $n$ points and let $\lines_r$ be a set of lines we call \textit{$r$-rich} that contain at least $r$ points in $\pts$, both in $\RR^2$. Then
\[ |L_r| = O\qty(\frac{n^2}{r^3} + \frac{n}{r}). \]
\end{theorem}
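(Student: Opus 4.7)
The plan is to derive this directly from the crossing number inequality (Ajtai--Chv\'atal--Newborn--Szemer\'edi), following the Sz\'ekely-style argument. An equivalent route would be to quote the incidence form of Szemer\'edi--Trotter and dualize, but the crossing-lemma proof is self-contained and produces both terms in one pass.

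First I would build an auxiliary topological graph $G$ drawn in $\RR^2$: set $V(G) = \pts$, and for each line $\ell \in \lines_r$ add an edge between each pair of points of $\pts \cap \ell$ that are consecutive along $\ell$. Since $|\pts \cap \ell| \geq r$, every rich line contributes at least $r-1$ edges, so
\[ |V(G)| = n, \qquad |E(G)| \geq (r-1)\,|\lines_r|. \]

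Next I would bound the crossings in this drawing. Edges on the same line do not cross each other, and edges on two distinct lines cross at most once because two distinct lines meet at most once. Hence
\[ \mathrm{cr}(G) \;\leq\; \binom{|\lines_r|}{2} \;\leq\; \tfrac12 |\lines_r|^2. \]
Now I would split into cases. If $|E(G)| < 4|V(G)|$, then $(r-1)|\lines_r| < 4n$ and we immediately get $|\lines_r| = O(n/r)$. Otherwise the crossing lemma applies and gives $\mathrm{cr}(G) \gtrsim |E(G)|^3 / |V(G)|^2$, which when combined with the upper bound above yields
\[ |\lines_r|^2 \;\gtrsim\; \frac{((r-1)|\lines_r|)^3}{n^2}, \]
and rearranging gives $|\lines_r| = O(n^2/r^3)$. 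Together the two cases produce the stated bound.

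The main obstacle is not a deep one: it is the bookkeeping around double counting at shared points. I need to be sure that edges coming from different rich lines really are distinct in $G$ even when those lines share a point of $\pts$ (which is fine, since the edges lie on distinct line segments) and that the ``at most one crossing per pair of lines'' estimate is applied only across pairs of edges from distinct lines. Once these are in hand, the rest of the argument is a direct substitution, so I expect the proof to be short.
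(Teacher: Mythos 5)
The paper does not prove this statement at all: Theorem \ref{th:dualSzemTrot} is quoted as the classical Szemer\'edi--Trotter theorem with a citation to the original paper, so there is no internal proof to compare against. Your proposal is the standard Sz\'ekely crossing-number argument, and it is correct as outlined. The two bookkeeping concerns you raise both resolve cleanly: edges arising from distinct rich lines can never coincide because two distinct points determine a unique line (so $G$ is a simple graph, as the crossing lemma requires), and the crossing bound $\mathrm{cr}(G)\leq\binom{|\lines_r|}{2}$ holds because two distinct lines meet in at most one point, at which at most one pair of edges (one segment from each line) can cross. The only caveat worth recording is that the argument, and indeed the statement itself, requires $r\geq 2$: for $r=1$ the factor $r-1$ vanishes and the set of $1$-rich lines is infinite, so the theorem is implicitly stated for $r\geq 2$, where $r-1=\Theta(r)$ and your two cases combine to give exactly $|\lines_r|=O\qty(\frac{n^2}{r^3}+\frac{n}{r})$.
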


This statement is equivalent to the statement in terms of point-line incidences, which goes as follows.

\begin{theorem}{\bf (Szemer\'edi and Trotter)} \label{th:SzemTrot}
Let $\pts$ be a set of $n$ points and let $\lines$ be a set of $m$ lines, both in $\RR^2$. Then
\[ I(\pts,\lines) = O(m^{2/3}n^{2/3}+m+n). \]
\end{theorem}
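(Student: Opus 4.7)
The plan is to derive Theorem~\ref{th:SzemTrot} from Theorem~\ref{th:dualSzemTrot} by a dyadic decomposition of $\lines$ according to the number of incidences per line. Writing $d_\ell = |\ell \cap \pts|$ so that $I(\pts,\lines) = \sum_{\ell} d_\ell$, I partition $\lines$ into dyadic levels $\mathcal{M}_k = \{\ell \in \lines : 2^k \le d_\ell < 2^{k+1}\}$ for $k \ge 0$ and write
\[
I(\pts,\lines) \;\le\; 2\sum_{k \ge 0} 2^{k}\, |\mathcal{M}_k|.
\]
Since every line in $\mathcal{M}_k$ is $2^k$-rich, Theorem~\ref{th:dualSzemTrot} yields $|\mathcal{M}_k| = O(n^2/2^{3k} + n/2^k)$, and we always have the trivial bound $|\mathcal{M}_k| \le m$.

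Next I would pick the critical scale $r^\star = (n^2/m)^{1/3}$, at which the trivial bound $m$ matches the leading $r$-rich bound $n^2/r^3$. For $2^k \le r^\star$ I use $|\mathcal{M}_k| \le m$, and the geometric sum $\sum 2^k m$ contributes $O(r^\star m) = O(n^{2/3}m^{2/3})$. For $2^k > r^\star$ the leading Szemer\'edi-Trotter term $n^2/2^{3k}$ yields another geometric series dominated by its first term, summing to $O(n^2/(r^\star)^2) = O(n^{2/3}m^{2/3})$. So the two main contributions match the claimed exponent.

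The main obstacle is the subleading $n/2^k$ piece of the $r$-rich bound, since naively $2^k \cdot (n/2^k) = n$ at each of $\Theta(\log n)$ dyadic levels, threatening a spurious $\log n$ factor. To avoid this I would always take the pointwise minimum $|\mathcal{M}_k| \le \min\bigl(m,\; C(n^2/2^{3k} + n/2^k)\bigr)$ at every level: once $2^k \ge n/m$ the trivial bound $m$ already beats $n/2^k$, so the total contribution of the $m$-bounded tail telescopes to $O(n)$, and for the extreme levels $2^k$ approaching $n$ the pair-counting bound $|\mathcal{M}_k| \le \binom{n}{2}/\binom{2^k}{2} = O(n^2/4^k)$ leaves only $O(1)$ surviving lines. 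Balancing the three regimes, the $n/2^k$ tail contributes only $O(n+m)$, and the additive terms $m$ and $n$ in the conclusion appear naturally as the trivial contributions from the bottom level (each line contributing at least one incidence) and from the extreme-rich tail respectively, giving the desired bound $I(\pts,\lines) = O(m^{2/3}n^{2/3} + m + n)$.
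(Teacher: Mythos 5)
The paper gives no proof of Theorem~\ref{th:SzemTrot}: it is the classical Szemer\'edi--Trotter theorem quoted from the literature, and the text merely asserts its equivalence with Theorem~\ref{th:dualSzemTrot}. Your proposal is therefore supplying the harder direction of that equivalence, and the skeleton you chose --- dyadic decomposition of $\lines$ by richness, the bound $|\mathcal{M}_k|=O(n^2/2^{3k}+n/2^k)$ from Theorem~\ref{th:dualSzemTrot}, the trivial bound $m$, and balancing at $r^\star=(n^2/m)^{1/3}$ --- is the right one for producing the main term $m^{2/3}n^{2/3}$. You also correctly identify the one real obstacle, namely the $n/2^k$ tail.

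However, the fixes you propose for that obstacle do not work, and this is a genuine gap rather than a technicality. For a level with $2^k\ge\max(n/m,\sqrt{n})$ the minimum of your three bounds still allows $\Theta(n)$ incidences from that level: the trivial bound gives $2^k m\ge n$, the rich-line bound gives $2^k\cdot Cn/2^k=Cn$, and pair counting gives $2^k\cdot O(n^2/4^k)=O(n^2/2^k)$, which is at least of order $n$ for every $2^k\le n$ and so never improves on $Cn$ except on the $O(1)$ topmost levels. Since there can be $\Theta(\log n)$ such levels, you are left with $O(n\log n)$, not $O(n)$. Worse, no bookkeeping with these three bounds alone can rescue the argument: a hypothetical configuration with about $\sqrt{n}/2^{j}$ lines carrying exactly $\sqrt{n}\,2^{j}$ points each, for $j=1,\dots,\tfrac12\log_2 n$, is consistent with $|L_r|=O(n^2/r^3+n/r)$ for all $r$, with pair counting, and with $m=O(\sqrt{n})$, yet it has $\Theta(n\log n)$ incidences while $m^{2/3}n^{2/3}+m+n=O(n)$. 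The missing ingredient is geometric: two distinct lines meet in at most one point, so if the $t=|L_{\lceil\sqrt{2n}\rceil}|$ lines containing $d_i\ge\sqrt{2n}$ points are considered together, their union already accounts for at least $\sum_i d_i-\binom{t}{2}$ distinct points of $\pts$, whence $\sum_i d_i\le n+\binom{t}{2}$; combined with $t=O\big(n^2/(2n)^{3/2}+n/\sqrt{2n}\big)=O(\sqrt{n})$ from Theorem~\ref{th:dualSzemTrot}, the entire super-rich tail contributes only $O(n)$ incidences in one stroke. For the remaining levels $2^k<\sqrt{2n}$ the term $n/2^k$ is dominated by $n^2/2^{3k}$, so your geometric-series argument already suffices there. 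With that replacement the derivation closes.
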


The many variants of this theorem constitute an entire discipline called incidence theory. The theorem has also proved useful in other domains: its numerous applications range from problems in additive number theory to harmonic analysis  \cite{bombieri2015problem,BD15,CGS16,katz2019improved,singer2021point}. Despite the community's interest \cite{ABCP,Elekes97b,Elek02,Solymosi06}, the \textit{inverse problem} : characterizing constructions that meet the Szemeredi-Trotter (mixed term) upper bound, remains widely open. Although there has been recent progress for lines in general position on Cartesian product point sets, \cite{CL,Murphy} not much else is known. 

Up to recently, only two constructions were known to match the (non-linear) term in Theorem \ref{th:dualSzemTrot} and Theorem \ref{th:SzemTrot}.  The first example, given by Erd\H{o}s in 1946, is based on a square lattice.  The second example, given by Elekes \cite{Elek02} in 2001, is based on a rectangular lattice. Adam Sheffer and the second author recently introduced the first infinite family of sharp Szemeredi-Trotter examples, which has the Erd\H{o}s and Elekes constructions as limits \cite{ShefferSilier}. In all of these examples, the point set is a \textit{lattice}: a Cartesian product of two arithmetic progressions.

Every previous sharp example for the Szemer\'edi-Trotter theorem was found by starting with a Cartesian product of two arithmetic progressions and then applying a projective transformation and/or point-line duality.  In this paper, we give a new sharp example which does not have this structure.

\parag{Our family of constructions.}

We present the first sharp Szemer\`edi-Trotter family of non-lattice point-line constructions in $\RR^2$: the $x$ and $y$ coordinates of the point set are a generalized arithmetic progression and for any richness $r$ there is a maximal family of $r$-rich lines on the point set. 

\begin{theorem} \label{th:construction}
For any non-square integer $k$, any large enough $N$ and $r \leq N$, let the point set $\pts=A_N^2$ where $A_N = \qty{ x_1 + x_2 \sqrt{k} ; x_1,x_2 \in \qty[-\sqrt{N},\sqrt{N}]}$. Then there exists a set of $r$-rich lines $|\lines_r|$ such that 
\[ |\lines_r| = \Theta\qty(\frac{|\pts|^2}{r^3} + \frac{|\pts|}{r}). \]
\end{theorem}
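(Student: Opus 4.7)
My strategy is to construct the rich lines explicitly, exploiting the ring structure of $\ZZ[\sqrt{k}]$ built into $A_N$. Set $M \asymp N/r$ with constants depending on $k$, and consider the family of slopes $\{p/q : p, q \in A_M\} \subset \mathbb{Q}(\sqrt{k})$; a Farey-type count in $\mathbb{Q}(\sqrt{k})$ (the degree-$2$ Northcott/Schanuel estimate for $\mathbb{P}^1$) shows there are $\Theta(M^2)$ distinct such slopes. A line of slope $p/q$ has the form $qy - px = c$ for some $c \in \ZZ[\sqrt{k}]$; let $r_{p,q}(c) := |\{(x,y) \in A_N^2 : qy - px = c\}|$ count the points of $\pts$ on this line. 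I aim to show that for each slope, $\Omega(MN)$ values of $c$ satisfy $r_{p,q}(c) \gtrsim r$; summing over the $\Theta(M^2)$ slopes then gives $\Theta(M^3 N) = \Theta(N^4/r^3) = \Theta(|\pts|^2/r^3)$ rich lines, matching the upper bound from Theorem~\ref{th:dualSzemTrot}.

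For each fixed $(p, q)$ I would establish three estimates. (a) The first moment $\sum_c r_{p,q}(c) = |A_N|^2 = \Theta(N^2)$, which is just the point count. (b) The image $\{c : r_{p,q}(c) > 0\}$ is contained in the set of $c = c_1 + c_2 \sqrt{k} \in \ZZ[\sqrt{k}]$ with $|c_1|, |c_2| = O(\sqrt{MN})$, hence has size $O(MN)$. (c) The additive energy $\sum_c r_{p,q}(c)^2 = O(N^3/M)$. Estimate (c) is the crux: rewriting it as $|\{(x_1, x_2, y_1, y_2) \in A_N^4 : q(y_1-y_2) = p(x_1-x_2)\}|$ and setting $u = x_1-x_2$, $v = y_1-y_2$, the task reduces to counting solutions of $qv = pu$ with $u, v \in A_N - A_N$; for generic coprime $(p,q)$ with $|N(q)| \asymp M$, these are parametrized by $(u, v) = \lambda(q, p)$ with $\lambda \in \ZZ[\sqrt{k}]$, and the constraint $|u|_\infty \lesssim \sqrt{N}$ forces $\lambda$ into a box of size $O(N/M)$, yielding $O(N/M)$ solutions each with at most $O(N^2)$ representations as differences of pairs in $A_N^2$.

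Combining (a) and (b) via a first-moment argument (after tuning the constant in $M \asymp N/r$) gives $\sum_{c : r_{p,q}(c) \geq r/2} r_{p,q}(c) \geq N^2/2$, since the low-richness contribution is at most $(r/2) \cdot O(MN) \leq N^2/2$. Applying Cauchy--Schwarz to this high-richness mass against (c) yields
\[
|\{c : r_{p,q}(c) \geq r/2\}| \geq \frac{(N^2/2)^2}{O(N^3/M)} = \Omega(MN).
\]
A constant adjustment of $r$ and $M$ then promotes the richness from $\geq r/2$ to $\geq r$, and the $|\pts|/r$ term (when it dominates) is trivially realized by horizontal/vertical lines, though in the regime $r \leq N$ the main term is the leading contribution.

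The main obstacle is making Step (c) rigorous for enough slopes. One must handle: pairs $(p,q) \in A_M^2$ with anomalously small norm $|N(q)|$ or $|N(p)|$, for which the parametrization $(u,v) = \lambda(q,p)$ places $\lambda$ in a longer and thinner box than in the generic case; pairs sharing a nontrivial common factor, which requires ideal-theoretic care because $\ZZ[\sqrt{k}]$ need not be a UFD; and the fact that, by Dirichlet's unit theorem, $\ZZ[\sqrt{k}]$ has infinitely many units when $k > 0$, so each proportionality class has multiple representatives $(p, q) \in A_M^2$ differing by units, which must be accounted for to avoid over- or under-counting slopes. Verifying that these non-generic cases contribute only a negligible fraction of the $\Theta(M^2)$ slopes, and that each generic slope carries $\Omega(MN)$ rich lines, is the heart of the argument.
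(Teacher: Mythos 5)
Your outline is arithmetically consistent and takes a genuinely different route from the paper: where the paper sieves the slope set in advance (imposing $|p_i|,|q_i|\in[c\sqrt M,\sqrt M]$ and the conditions $\gcd(p_1^2-kp_2^2,q_1^2-kq_2^2)\le 5$, $\gcd(p_1,p_2)\le 5$) so that \emph{every} line through \emph{every} point of $A_{N/4}^2$ can be shown to be $\Theta(N/M)$-rich by an explicit divisibility argument, you work with all slopes $p/q$, $p,q\in A_M$, and argue only \emph{on average} in each direction via the first moment (a), the support bound (b), and the second moment (c) combined with Cauchy--Schwarz. Steps (a), (b), and the Cauchy--Schwarz bookkeeping are correct, and the final count $\Theta(M^2)\cdot\Omega(MN)=\Omega(N^4/r^3)$ matches the paper's $|L|=\Theta(NM^3)$.

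However, the proof is not complete: everything you label as ``the heart of the argument'' is precisely what is missing, and it is not a routine verification. Concretely, (i) estimate (c) fails as stated for a non-negligible-looking set of pairs: if $q_1/q_2$ is close to $\sqrt k$ then $|N(q)|=|q_1^2-kq_2^2|$ can be much smaller than $M$, the lattice $\lambda\mapsto\lambda q$ has small covolume, and the box contains far more than $N/M$ values of $\lambda$; you must prove such slopes are a vanishing fraction of the $\Theta(M^2)$, which is a separate counting argument. (ii) The parametrization $(u,v)=\lambda(q,p)$ requires $p,q$ coprime, and in $\ZZ[\sqrt k]$ (not a UFD in general) this must be handled at the level of ideals; pairs with a large ideal gcd inflate the solution count and must be sieved out. (iii) The claim that there are $\Theta(M^2)$ distinct slopes is asserted via Northcott/Schanuel, but that theorem counts projective points of bounded height, not classes with a representative in a prescribed coordinate box, and unit multiplication can identify many pairs; a real argument is needed here. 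The paper's gcd conditions, the restriction of $|p_i|,|q_i|$ to $[c\sqrt M,\sqrt M]$ with $c$ near $1$ (which forces $|p_1^2-kp_2^2|=\Omega(M)$), the prime-by-prime sieve showing these conditions remove only a $\frac56$-fraction of quadruples, and the use of Beck's theorem to convert the quadruple count into a distinct-slope count are exactly the devices that resolve (i)--(iii); your proposal identifies these obstacles correctly but supplies no mechanism for any of them, so as written it is a plausible program rather than a proof.
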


See Section  \ref{sec:construction} for the proof and the explicit construction of the line set. 
Our point set $\pts = A_N^2$ is not a product of arithmetic progressions.  It is a product of generalized arithmetic progressions.  However, not every product of generalized arithmetic progressions gives a sharp example for Szemer\'edi-Trotter.  The algebraic structure coming from $\sqrt{k}$ is crucial.  If we replace $\sqrt{k}$ by a transcendental number, then the construction would be far from sharp for Szemer\'edi-Trotter.  


\parag{Application to Inverse Discrete Loomis-Whitney.}

The Loomis-Whitney inequality \cite{LW} upperbounds the volume of an $n$ dimensional set by the product of the areas of its "shadows": the $n-1$ dimensional coordinate projections. 

\begin{theorem} \label{th:LoomisWhitney}
Let $m$ be the measure of an open subset $O$ of the Euclidean $n$-space, and let $m_1,...,m_n$ be the $(n-1)$-dimensional measures of the projections of $O$ on the coordinate hyperplanes. Then  
\[ m^{n-1} \leq \Pi_{i=1}^n m_i. \]
\end{theorem}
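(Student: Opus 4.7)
\medskip
\noindent\textbf{Proof plan for Theorem \ref{th:LoomisWhitney}.}
The plan is to argue by induction on the dimension $n$, slicing $O$ in one coordinate direction and combining the inductive hypothesis on each slice with the generalized H\"older inequality.

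The base case $n=2$ is immediate: for an open set $O\subset\RR^2$, the projection $\pi_1(O)$ onto the $x_2$–axis and $\pi_2(O)$ onto the $x_1$–axis satisfy $O\subset \pi_2(O)\times\pi_1(O)$, so $m\leq m_1 m_2$. For the inductive step, fix $n\geq 3$, assume the inequality in dimension $n-1$, and for $t\in\RR$ let
\[
O(t)=\{(x_1,\dots,x_{n-1})\in\RR^{n-1}:(x_1,\dots,x_{n-1},t)\in O\},\qquad \mu(t)=\text{measure of }O(t).
\]
By Fubini, $m=\int\mu(t)\,dt$ and $m_i=\int\mu_i(t)\,dt$ for $i<n$, where $\mu_i(t)$ is the $(n-2)$–measure of the slice of $O_i$ at height $t$. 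The key geometric observation is that for each $i<n$, the projection of $O(t)\subset\RR^{n-1}$ onto its $i$–th coordinate hyperplane is contained in that slice, so the inductive hypothesis applied to $O(t)$ gives $\mu(t)^{n-2}\leq\prod_{i=1}^{n-1}\mu_i(t)$. Moreover $O(t)\subset O_n$, hence $\mu(t)\leq m_n$.

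Next I would combine these bounds with the right exponents. Multiplying the two displays yields
\[
\mu(t)^{n-1}=\mu(t)^{n-2}\cdot\mu(t)\leq m_n\prod_{i=1}^{n-1}\mu_i(t),
\]
so
\[
\mu(t)\leq m_n^{1/(n-1)}\prod_{i=1}^{n-1}\mu_i(t)^{1/(n-1)}.
\]
Integrating in $t$ and applying the generalized H\"older inequality to the $n-1$ nonnegative functions $\mu_1(\cdot),\dots,\mu_{n-1}(\cdot)$, each with exponent $n-1$, gives
\[
m=\int\mu(t)\,dt\leq m_n^{1/(n-1)}\prod_{i=1}^{n-1}\Bigl(\int\mu_i(t)\,dt\Bigr)^{1/(n-1)}=\prod_{i=1}^{n}m_i^{1/(n-1)},
\]
and raising to the $(n-1)$–st power yields the claim.

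The main obstacle is bookkeeping the exponents so that the single factor $\mu(t)\leq m_n$ supplies exactly the ``missing'' copy of $m_n$ while the remaining $n-1$ slice–measures combine cleanly under H\"older; any other split wastes a power and gives only $m^{n-2}\leq\prod_{i<n} m_i$. A minor technical point is confirming that $t\mapsto\mu_i(t)$ and $t\mapsto\mu(t)$ are measurable and that the inductive hypothesis applies to $O(t)$ for almost every $t$, both of which follow from Fubini's theorem applied to the open (hence measurable) set $O$.
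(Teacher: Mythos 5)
The paper does not prove Theorem \ref{th:LoomisWhitney} at all: it is quoted as classical background (cited to Loomis and Whitney) and only its discrete inverse problem is pursued, so there is no in-paper proof to compare against. Your argument is nevertheless a correct and complete proof, and it is the standard modern route: induction on dimension, Fubini to slice in the $x_n$ direction, the containment of each coordinate projection of the slice $O(t)$ in the corresponding slice of $O_i$ together with $O(t)\subset O_n$, and the generalized H\"older inequality with $n-1$ exponents equal to $n-1$ to reassemble the integrals. I checked the exponent bookkeeping and it is right: $\mu(t)^{n-1}\leq m_n\prod_{i<n}\mu_i(t)$ gives $\mu(t)\leq m_n^{1/(n-1)}\prod_{i<n}\mu_i(t)^{1/(n-1)}$, and H\"older applies because $(n-1)\cdot\frac{1}{n-1}=1$. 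The measurability and openness points you flag are genuinely the only technical loose ends, and they resolve as you say (slices of open sets are open, projections of open sets are open, and Fubini supplies measurability of $t\mapsto\mu(t)$ and $t\mapsto\mu_i(t)$). For contrast, the original Loomis--Whitney argument is combinatorial rather than analytic: one first proves the inequality for finite unions of unit lattice cubes by a counting/injection argument and then passes to general open sets by a limiting procedure over refined grids. Your approach is shorter and generalizes more readily (e.g., to the Brascamp--Lieb setting), while the original is more elementary and is closer in spirit to the discrete projection questions the paper actually studies.
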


The many variations of this theorem constitute a rich field of study \cite{Ball,BT,BCW,Do20}. These results also find applications in other domains from group theory \cite{gromov} to the Kakeya problem in harmonic analysis \cite{BCT06}.
Recently there has been much interest in the \textit{inverse problem}: characterizing sets that provide sharp examples of the Loomis-Whitney inequality \cite{CGG18,AB21}.
We focus on the discrete variant of the inverse problem in $\RR^2$: characterizing point configurations in the plane whose 1 dimensional projections are minimal. Classical Loomis-Whitney tells us that in the case of a point set in $\RR^2$ of size $n^2$ (using affine transformations to map 2 arbitrary projection directions to the coordinate projections) the product of the size of these two projections is greater or equal to $n^2$. Equivalently, it is not possible for both projections to have size less than $n$. 

Thus the natural inverse discrete Loomis-Whitney problem in the plane asks under which structural conditions of the point set of size $n^2$, and for which set of projection directions, all the one-dimensional projections have size $\Theta(n)$. Elementary arguments yield the following necessary and sufficient condition for square lattices:

\begin{lemma}\label{th:LM_lattice}
Let the point set $\pts$ be a section of the integer lattice of size $n \times n$. A one-dimensional projection  of $\pts$ has size $\Theta(n)$ if and only if the slope of the projection direction is an irreducible rational $p/q$ such that $p,q=O(1)$.
\end{lemma}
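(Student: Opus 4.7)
The plan is to analyze the projection in coordinates and split into cases by the slope of the projection direction. Without loss of generality take $\pts = \{0, 1, \ldots, n-1\}^2$. A projection whose direction has slope $p/q$ identifies two lattice points exactly when their difference is parallel to $(q, p)$, so the projection size equals the number of equivalence classes; when $p, q$ are coprime integers this count equals the number of distinct values of the linear functional $(x, y) \mapsto py - qx$ on $\pts$.

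If the slope is irrational, then no nonzero lattice vector is parallel to the projection direction, so the projection is injective on $\pts$ and produces $n^2 \gg n$ values. This immediately rules out irrational slopes.

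For the rational case, write the slope in lowest terms as $p/q$ and set $M = \max(|p|, |q|)$. I would bound the projection size both above and below in terms of $nM$. For the lower bound, each fiber is the intersection of $\pts$ with a line of direction $(q, p)$, and such a line meets $[0, n-1]^2$ in at most $(n-1)/M + 1$ lattice points, since $t$ must satisfy $|qt|, |pt| \le n-1$. So the number of fibers, which is the projection size, is at least $n^2 / ((n-1)/M + 1) = \Omega(nM)$ as long as $M \le n$. For the upper bound, the values $py - qx$ for $(x, y) \in \pts$ lie in an interval of length $(|p| + |q|)(n-1) = O(nM)$, capping the count at $O(nM)$.

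These matching bounds give projection size $\Theta(nM)$, which is $\Theta(n)$ precisely when $M = O(1)$, i.e., when $p, q = O(1)$. No step here is genuinely hard, as the lemma advertises; the only care needed is handling the degenerate slopes $0$ and $\infty$ (corresponding to $(p, q) = (0, 1)$ and $(1, 0)$, each yielding projection size exactly $n$) and ensuring the coprimality assumption is used correctly so that the linear functional $py - qx$ separates points in distinct fibers.
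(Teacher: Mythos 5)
Your proposal is correct, and it is more self-contained than the paper's argument. The paper handles the two directions asymmetrically: for the ``if'' direction it invokes the explicit family of parallel rich lines from the proof of Corollary \ref{th:LM_GAP} (specialized to a square lattice), and for the ``only if'' direction it argues that when $p=\omega(1)$ or $q=\omega(1)$ each line of slope $p/q$ meets the grid in $o(n)$ points, forcing $\omega(n)$ fibers. You instead prove the single quantitative statement that the projection size is $\Theta(nM)$ with $M=\max(|p|,|q|)$ (capped at $n^2$ once $M\gg n$), getting the lower bound from the fiber-size count --- essentially the same observation the paper uses for necessity --- and the upper bound from the range of the separating linear functional rather than from the Szemer\'edi--Trotter construction. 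This buys a proof that does not depend on Theorem \ref{th:construction} or Corollary \ref{th:LM_GAP} and yields the sharper conclusion $\Theta(nM)$, which the paper's version does not state. One small slip: the functional constant on lines of direction $(q,p)$ is $qy-px$, not $py-qx$ (the latter evaluates to $p^2-q^2\neq 0$ on $(q,p)$); since your interval-length and fiber-count bounds are symmetric in $p$ and $q$, this does not affect the argument, but you should fix the indexing. Your handling of the degenerate slopes and of the coprimality hypothesis is appropriate.
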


Note for any square lattice in the plane there exists an affine map which takes it to a square section of the integer square lattice. So up to affine transformation of the plane lemma \ref{th:LM_lattice} holds for any square lattice.

Obtaining sharp constructions for the discrete inverse Loomis-Whitney problem in the plane for an $n \times n$ grid of points overlaps with finding sharp examples for Theorem \ref{th:dualSzemTrot} because finding a family of $\Theta(n)$ parallel $\Theta(n)$-rich lines yields a projection direction along which a constant fraction of the points have minimal projection size. We obtain the following application of theorem \ref{th:construction}:

\begin{corollary}\label{th:LM_GAP}
For any non-square integer $k$, any large enough $N$, let the point set $\pts=A_N^2$ where $A_N = \qty{ x_1 + x_2 \sqrt{k} ; x_1,x_2 \in \qty[-\sqrt{N},\sqrt{N}]}$. Then for any constant $p=O(1)$ there is a set of projections $\big\{ \pi_i \}_{i=1}^{\Theta(p)}$ such that $|\pi_i(\pts) | = \Theta \qty( \sqrt{p} n)$.
\end{corollary}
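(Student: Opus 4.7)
Let $n = |A_N|$, so that $|\pts| = n^2$. My plan is to extract the required $\Theta(p)$ projection directions from the rich-line construction of Theorem~\ref{th:construction}.

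I will first apply Theorem~\ref{th:construction} with richness $r = c_0 \, n/\sqrt{p}$ for a sufficiently small constant $c_0$. Since $r \leq N$, the mixed term dominates and yields a set $\lines_r$ with $|\lines_r| = \Theta(|\pts|^2/r^3) = \Theta(n p^{3/2})$ rich lines.

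Next, I group these lines by slope. In any fixed direction, parallel $r$-rich lines are pairwise disjoint and so their number is at most $|\pts|/r = n\sqrt{p}$. Hence the $\Theta(np^{3/2})$ rich lines span at least $\Theta(p)$ distinct slopes; moreover, the explicit structure of the construction in Section~\ref{sec:construction} should force $\Theta(p)$ of these slopes to each carry a full family of $\Theta(n\sqrt{p})$ parallel $r$-rich lines, collectively exhausting a constant fraction of $\pts$. Writing $\pi_{s_i}$ for the projection along the $i$-th such slope, the rich-line count immediately gives the lower bound $|\pi_{s_i}(\pts)| \geq \Theta(n\sqrt{p})$, as each parallel line is a distinct value in the image.

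For the matching upper bound, I plan to use the Minkowski embedding $\ZZ[\sqrt{k}] \hookrightarrow \RR^2$, $w \mapsto (w, \bar{w})$, where $\bar{\cdot}$ denotes the Galois conjugation $\sqrt{k} \mapsto -\sqrt{k}$. Writing each slope as $s_i = \alpha_i/\beta_i$ with $\alpha_i, \beta_i \in \ZZ[\sqrt{k}]$, the quantity $\beta_i y - \alpha_i x$ takes values in $\ZZ[\sqrt{k}]$ and under this embedding lies in a rectangle of area $O\!\left(N \cdot (|\alpha_i|+|\beta_i|)(|\bar{\alpha}_i|+|\bar{\beta}_i|)\right)$, so the lattice-point count is of the same order. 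Once I verify that the construction's slopes have Minkowski height $(|\alpha_i|+|\beta_i|)(|\bar{\alpha}_i|+|\bar{\beta}_i|) = \Theta(\sqrt{p})$, this will yield $|\pi_{s_i}(\pts)| = O(n\sqrt{p})$, matching the lower bound and giving $|\pi_{s_i}(\pts)| = \Theta(\sqrt{p}\,n)$.

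The hard part will be the structural claim that the rich lines of Theorem~\ref{th:construction} decompose into $\Theta(p)$ balanced parallel families with the right Minkowski heights, rather than concentrating in too few directions or fragmenting across too many. This depends on the explicit slopes used in Section~\ref{sec:construction} and is precisely where the algebraic structure of $\ZZ[\sqrt{k}]$ (its norm form and units) enters the argument; without the $\sqrt{k}$ structure, the slopes would not saturate the Minkowski upper bound simultaneously with carrying $\Theta(n\sqrt{p})$ parallel rich lines.
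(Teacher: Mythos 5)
Your overall strategy is the right one, but the step you defer --- that the rich lines of Theorem~\ref{th:construction} split into $\Theta(p)$ parallel families of $\Theta(n\sqrt p)$ lines each --- is the entire content of the corollary, and the pigeonhole argument you offer in its place cannot establish it. Knowing only that $|\lines_r|=\Theta(np^{3/2})$ and that each direction carries at most $|\pts|/r = n\sqrt p$ parallel $r$-rich lines is consistent with the lines being spread one per direction over $np^{3/2}$ distinct slopes, in which case no direction yields a small projection. The claim has to be read off from the explicit construction, and there it is immediate: the line set of Section~\ref{sec:construction} is defined as $\{y=s(x-a)+b : (a,b)\in A_{N/4}^2,\ s\in S\}$ with an explicit slope set $S$ of size $\Theta(M^2)=\Theta(p)$, and since each line of a fixed slope $s$ contains $O(N/M)$ anchor points of $A_{N/4}^2$, each slope carries $\Omega\big(|A_{N/4}^2|/(N/M)\big)=\Omega(NM)=\Omega(n\sqrt p)$ distinct lines (and $O(n\sqrt p)$ by disjointness of parallel rich lines). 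So the gap is fillable from the construction, but as written your proof does not close it.

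The two arguments also diverge on the upper bound $|\pi_i(\pts)|=O(\sqrt p\, n)$. The paper gets it from a covering property: because the construction's lines are anchored only at the subgrid $A_{N/4}^2$, the paper runs the construction on the enlarged set $\pts'=A_{4N}^2$, whose anchor grid is exactly $\pts=A_N^2$; then every point of $\pts$ lies on exactly one line of each parallel family, so the projection image injects into a family of $\Theta(\sqrt p\, n)$ lines. Your Minkowski-embedding count ($w\mapsto(w,\bar w)$, bounding the values of $(q_1+q_2\sqrt k)y-(p_1+p_2\sqrt k)x$ in a box of area $O(\sqrt p\, N)$) is a genuinely different and viable route that sidesteps the covering issue entirely --- the heights do work out, since $|p_i|,|q_i|\le\sqrt M=p^{1/4}$ --- but it, too, is left as a verification to be done rather than carried out. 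In short: correct architecture and a workable alternative upper-bound mechanism, but the decisive structural step is asserted rather than proved.
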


\parag{Sharp example for Energy Bound.}

Our constructions provide a new tight example for the following lemma which provide upperbounds for the additive energy of finite subsets of $\RR$ \cite{MRS13}: 

\begin{lemma}\label{th:energy_bounds}
Let $A$, $B$ and $X$ be finite subsets of $\RR$ such that $|X| \leq |A| |B|$. Then
\[ \sum_{x \in X}  E^+(A,xB) = O\qty( |A|^{3/2} |B|^{3/2} |X|^{1/2}). \]
\end{lemma}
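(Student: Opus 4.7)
The plan is to recast $\sum_{x \in X} E^+(A, xB)$ as an incidence count on the grid $A \times B$ and then apply the Szemer\'edi--Trotter theorem (Theorem \ref{th:dualSzemTrot}) after a dyadic decomposition. For each $x \in X$ and $c \in \RR$, let $\ell_{x,c}$ be the line $\{(u,v) \in \RR^2 : u + xv = c\}$ and set $\mu(x,c) := |\ell_{x,c} \cap (A \times B)|$. A quadruple $(a_1,a_2,b_1,b_2)$ contributes to $E^+(A,xB)$ exactly when $a_1 + xb_1 = a_2 + xb_2$, so grouping by the common value $c$ yields $E^+(A,xB) = \sum_c \mu(x,c)^2$. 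The quantity to estimate is therefore
\[
\mathcal N := \sum_{x \in X} \sum_c \mu(x,c)^2.
\]

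Next, I would perform a dyadic decomposition in richness: for each integer $j \geq 0$, let $L_j$ be the collection of lines $\ell_{x,c}$ with $x \in X$ and $\mu(x,c) \in [2^{j-1}, 2^j)$, noting that lines of different slopes are automatically distinct. Two competing upper bounds on $|L_j|$ are available. First, for any fixed $x$, parallel lines of slope $-1/x$ partition $A \times B$, so the number of $2^j$-rich lines of this slope is at most $|A||B|/2^j$; summing over the at most $|X|$ slopes gives $|L_j| \leq |X||A||B|/2^j$. Second, Szemer\'edi--Trotter applied to the $|A||B|$-point set $A \times B$ gives $|L_j| = O(|A|^2|B|^2/2^{3j} + |A||B|/2^j)$. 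Consequently
\[
\mathcal N \lesssim \sum_j 2^{2j} \min\!\left( \frac{|X||A||B|}{2^j},\ \frac{|A|^2|B|^2}{2^{3j}} + \frac{|A||B|}{2^j} \right).
\]

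Finally, the two main terms balance at the scale $2^J := \sqrt{|A||B|/|X|}$, which is $\geq 1$ by the hypothesis $|X| \leq |A||B|$. For $j \leq J$ I use the parallel-lines bound; the resulting geometric sum is dominated by $j = J$ and contributes $O(2^J |X| |A| |B|) = O(|A|^{3/2}|B|^{3/2}|X|^{1/2})$. For $j > J$ I use the Szemer\'edi--Trotter main term; the resulting geometric sum is again dominated by $j = J$ and contributes $O(2^{-J}|A|^2|B|^2) = O(|A|^{3/2}|B|^{3/2}|X|^{1/2})$. The stray $|A||B|/2^j$ term from Szemer\'edi--Trotter is absorbed using the trivial line-richness bound $2^j \leq \min(|A|,|B|) \leq (|A||B|)^{1/2}$ together with $|X| \geq 1$. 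The main obstacle is the bookkeeping of the dyadic interpolation and using the hypothesis $|X| \leq |A||B|$ to guarantee that the crossover scale lies in the valid dyadic range; beyond this, the argument is a standard application of Szemer\'edi--Trotter with no further algebraic or geometric input.
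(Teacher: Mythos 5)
Your argument is correct, and it is essentially the intended one: the paper does not prove Lemma~\ref{th:energy_bounds} itself (it cites \cite{MRS13}) but explicitly notes that the proof is an application of Theorem~\ref{th:dualSzemTrot}, and your dyadic decomposition of the rich lines $u+xv=c$ on the grid $A\times B$, balanced against the trivial parallel-class bound at the scale $\sqrt{|A||B|/|X|}$, is exactly that argument. The only caveat is that absorbing the stray $|A||B|/2^{j}$ term via the richness bound $2^{j}\le\min(|A|,|B|)$ implicitly requires $0\notin X$ (a vertical/degenerate pencil can be $|B|$-rich), but that degenerate slope already violates the stated inequality when $|B|>|A||X|$, so it must be excluded from the lemma in any case.
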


Note there is an equivalent lemma for multiplicative energy. \cite{MRS13} The proofs of this lemma relies on an application of Theorem \ref{th:dualSzemTrot} so all of our sharp examples from Theorem \ref{th:construction} are also sharp for this lemma.

\begin{lemma}\label{th:energy_ex}
For any non-square constant $k$, and constant $M \leq N$ let 

\noindent $A_N = \qty{ x_1 + x_2 \sqrt{k} ; x_1,x_2 \in \qty[-\sqrt{N},\sqrt{N}]}$ and let $X=S \subset \frac{A_N}{A_N}$ be the slope set from the proof of Theorem \ref{th:construction}. Then $|X| \leq |A_N|^2$ and
\[ \sum_{x \in X}  E^+(A_N,xA_N) = O\qty( |A_N|^3 |X|^{1/2}). \]
\end{lemma}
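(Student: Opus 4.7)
The plan is to deduce Lemma \ref{th:energy_ex} as a direct specialization of Lemma \ref{th:energy_bounds} to $A = B = A_N$ with $X = S$, the slope set produced in the proof of Theorem \ref{th:construction}. Two items need attention: verifying the hypothesis $|X| \leq |A_N|^2$ required by Lemma \ref{th:energy_bounds}, and then reading off the stated energy inequality, which follows with essentially no further work once the hypothesis is in hand.

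For the size hypothesis I would unwind the description of $S$ given in Section \ref{sec:construction}. Each slope in $S$ is attached to an $r$-rich line through points of $\pts = A_N^2$, so it can be expressed as a ratio of differences of elements of $A_N$; in particular $S$ sits inside the quotient set $(A_N - A_N)/(A_N - A_N)$ inside the number field generated by $\sqrt{k}$. The crude bound on this quotient set is $|A_N|^4$, but the explicit form of the slopes in the construction (as ratios in $\ZZ[\sqrt{k}]$ with controlled numerator and denominator) cuts the count down to $|S| \leq |A_N|^2$. This is the only place where the explicit structure of $S$ from Theorem \ref{th:construction} enters the argument, and it amounts to a counting check internal to the construction.

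Once $|X| \leq |A_N|^2$ is confirmed, the hypothesis of Lemma \ref{th:energy_bounds} holds with $A = B = A_N$, and its conclusion reads
\[ \sum_{x \in X} E^+(A_N, xA_N) = O\qty(|A_N|^{3/2}|A_N|^{3/2}|X|^{1/2}) = O\qty(|A_N|^3 |X|^{1/2}), \]
which is precisely the claimed bound. There is no serious technical obstacle: the inequality itself is a one-line specialization of Lemma \ref{th:energy_bounds}, and the main (mild) obstacle is the slope-counting bookkeeping described above. The deeper content of the lemma---namely that this big-$O$ is actually tight, because the proof of Lemma \ref{th:energy_bounds} in \cite{MRS13} routes through Theorem \ref{th:dualSzemTrot} and our construction saturates that incidence bound---is a sharpness remark external to the stated inequality and is not required as part of the proof.
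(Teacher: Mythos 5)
Your deduction of the upper bound from Lemma \ref{th:energy_bounds} is fine as far as it goes: with $A=B=A_N$ the hypothesis $|X|\le |A_N|^2$ is immediate from $|S|=\Theta(M^2)$ (Lemma \ref{th:slopes}) together with $M\le N$ and $|A_N|=\Theta(N)$ --- you do not need the detour through quotient sets --- and the conclusion of Lemma \ref{th:energy_bounds} then reads off as the displayed bound. The paper itself points out this implication in the sentence preceding the lemma, so as a proof of the literal $O(\cdot)$ inequality your argument is correct, but it is essentially a restatement of Lemma \ref{th:energy_bounds} and uses nothing about the construction.

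The difficulty is that this is not what the paper's proof establishes, and it misses the content of the lemma. The lemma appears under the heading ``Sharp example for Energy Bound,'' is restated in Section \ref{sec:applications} with $\Theta$ in place of $O$, and the proof given there is devoted entirely to the matching lower bound: one writes $\sum_{x\in X}E^+(A_N,xA_N)=\sum_{x\in X}\sum_y r_{A_N+xA_N}^2(y)$, observes that the constructed lines with slope $x\in S$ each contribute $\Theta\big((N/M)^2\big)$ because each is $\Theta(N/M)$-rich, and multiplies by the $\Theta(NM^3)$ lines of the construction to obtain $\Omega(N^3M)=\Omega\big(|A_N|^3|X|^{1/2}\big)$. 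That computation --- the only place where the specific structure of $A_N$ and $S$ is genuinely used --- is exactly the part you set aside as ``a sharpness remark external to the stated inequality.'' Without it the lemma is vacuous, since the upper bound holds for every $X$ with $|X|\le|A_N|^2$ by Lemma \ref{th:energy_bounds} alone; to recover what the paper proves you must supply the lower-bound computation via the richness and the count of the lines from Theorem \ref{th:construction}.
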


Note we can construct an equivalent sharp example for the multiplicative energy version of the lemma. 

\parag{Acknowledgments.}
The authors are grateful to Lingxian Zhang for interesting conversations.

\section{Background}

This section presents tools that will be used in the proofs. 

\textbf{Asymptotic notation} is used throughout. We say $f(n) = O(g(n))$ if there exist constants $c, n_0 >0$ such that $|f(n)| \leq c \cdot g(n)$ for all $n \geq n_0$. Likewise $f(n) = \Omega(g(n))$ if there exist constants $c, n_0 >0$ such that $|f(n)| \geq c \cdot g(n)$ for all $n \geq n_0$. We say $f(n) = \Theta(g(n))$ if and only if $f(n) = O(g(n))$ and $f(n) = \Omega(g(n))$.

We also use the stronger notation $f(n) = o(g(n))$ if for all $\epsilon>0$ there exists $n_\epsilon$ such that $|f(n)| \leq \epsilon \cdot g(n)$ for all $n \geq n_\epsilon$. Likewise we say $f(n) = \omega(g(n))$ if for all $\epsilon>0$ there exists $n_\epsilon$ such that $|f(n)| \geq \epsilon \cdot g(n)$ for all $n \geq n_\epsilon$.

The following classical result from Beck \cite{Beck83} is used in the proof of Theorem \ref{th:construction}.

\begin{theorem} \label{th:Beck}{\bf (Beck)}
There exist constants $c$, $k$ such that for any set of $n$ points in $\RR^2$
\begin{itemize}
    \item \textbf{either} there is a subset of $n/c$ colinear points
    \item \textbf{or} there are $\Omega(n^2/k)$ distinct lines containing at least two points of the point set. Such lines are said to be \textit{determined} by the point set.
\end{itemize}

\end{theorem}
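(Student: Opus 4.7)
The plan is to derive Beck's theorem as a corollary of Szemer\'edi-Trotter (Theorem \ref{th:dualSzemTrot}) via a double counting argument on pairs of points, organized dyadically by line richness. Assume for contradiction that neither alternative holds: suppose every line contains fewer than $n/c$ points (so the first bullet fails), and we will show the number of determined lines is $\Omega(n^2)$ for $c$ sufficiently large, by showing most pairs of points lie on lines of bounded richness.

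First, I would stratify the determined lines dyadically by richness: for each $i \geq 1$, let $\lines^{(i)}$ be the set of lines containing between $2^i$ and $2^{i+1}$ points of $\pts$. By Theorem \ref{th:dualSzemTrot},
\[ |\lines^{(i)}| = O\!\qty(\tfrac{n^2}{2^{3i}} + \tfrac{n}{2^i}). \]
Each line in $\lines^{(i)}$ accounts for $O(2^{2i})$ collinear pairs of points, so the total number of pairs lying on lines of richness at least $2^{i_0}$ is bounded by
\[ T_{i_0} = \sum_{i \geq i_0} O\!\qty(\tfrac{n^2}{2^i} + n \cdot 2^i). \]
The assumption that no line has $n/c$ points truncates the sum at $i \leq \log_2(n/c)$, which is crucial to control the linear term. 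The first geometric series sums to $O(n^2/2^{i_0})$, and the truncated second series sums to $O(n^2/c)$, giving $T_{i_0} = O(n^2/2^{i_0} + n^2/c)$.

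Next I would choose the constants: pick $i_0$ a large absolute constant and $c$ large enough that $T_{i_0} \leq \tfrac{1}{2}\binom{n}{2}$. Then at least $\tfrac{1}{2}\binom{n}{2} = \Omega(n^2)$ of the $\binom{n}{2}$ point pairs lie on lines containing fewer than $2^{i_0}$ points. Since each such line accounts for at most $\binom{2^{i_0}}{2} = O(1)$ pairs, the number of distinct lines hosting these pairs is $\Omega(n^2)$, each containing at least two points of $\pts$. This is exactly the second alternative with $k$ depending on the chosen $i_0$.

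The main obstacle — and the reason Szemer\'edi-Trotter is needed rather than a weaker bound — is handling the linear term $n/r$ in the incidence bound: without the truncation coming from the ``no line has $n/c$ points'' assumption, the $n\cdot 2^i$ contributions at large $i$ would blow up. Once the truncation is in place, the argument is a clean double count; the only real choice is balancing the two constants $i_0$ and $c$ so that the pair count on ``rich enough'' lines is a fraction of $\binom{n}{2}$.
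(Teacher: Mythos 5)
Your proposal is correct and complete. Note, however, that the paper does not prove this statement at all: it is quoted as a classical background result with a citation to Beck's 1983 paper, so there is no in-paper argument to compare against. What you have written is the standard modern derivation of Beck's theorem from the Szemer\'edi--Trotter bound, and all the key points are handled properly: the dyadic stratification of determined lines by richness, the bound of $O(n^2/2^i + n\cdot 2^i)$ on the number of collinear pairs contributed by the $i$-th dyadic class, the truncation of the sum at $i \leq \log_2(n/c)$ coming from the negation of the first alternative (which is exactly what tames the linear term), and the final choice of the two constants so that at least half of the $\binom{n}{2}$ pairs live on lines of bounded richness, each of which carries only $O(1)$ pairs. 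One cosmetic remark: you frame the argument as a proof by contradiction, but it is really a direct implication (if no line contains $n/c$ points, then there are $\Omega(n^2)$ determined lines), which is how you actually execute it. It is also worth knowing that Beck's original proof predates Szemer\'edi--Trotter in the form used here and relied on weaker incidence estimates; your route is the cleaner one that the citation implicitly licenses, and it correctly explains why the full strength of Theorem \ref{th:dualSzemTrot} (rather than, say, only the $n^2/r^3$ term) is what makes the bookkeeping work.
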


\section{New constructions} \label{sec:construction}

In this section we prove Theorem \ref{th:construction} and provide an explicit description of the line set. We first recall the statement of the theorem.
\vspace{2mm}

\noindent {\bf Theorem \ref{th:construction}.}$\qquad$\\
\emph{For any non-square integer $k$, any large enough $N$ and $r \leq N$, let the point set $\pts=A_N^2$ where $A_N = \qty{ x_1 + x_2 \sqrt{k} ; x_1,x_2 \in \qty[-\sqrt{N},\sqrt{N}]}$. Then there exists a set of $r$-rich lines $|\lines_r|$ such that 
\[ |\lines_r| = \Omega \qty(\frac{\pts^2}{r^3} + \frac{\pts}{r}). \]
}

\begin{proof}
We double count the number of incidences to obtain a lower bound on the size of the line set. This involves proving each of the lines are $r$-rich. Let $M=\frac{N}{r}$.

We define the point set, slope set and line set as follows:

\begin{equation*}
    A_N = \qty{ x_1 + x_2 \sqrt{k} \hspace{3pt}|\hspace{3pt} x_1,x_2 \in [-\sqrt{N},\sqrt{N}]}
\end{equation*}
There are $2\sqrt{N}$ choices of $x_i$ and $k$ is not a square so $|A_N| = \qty(2\sqrt{N})^2 = 4N = \Theta(N)$. Letting $P=A_N^2$ we have $|P|=\Theta\qty(N^2)$. Next I define the slope set

\begin{equation*}
    S = \qty{\frac{p_1+p_2\sqrt{k}}{q_1+q_2\sqrt{k}} ;  |p_i|,|q_i| \in \qty[c \sqrt{M},\sqrt{M}] , \gcd(p_1^2-k p_2^2,q_1^2-k q_2^2) \leq 5, \gcd(p_1,p_2) \leq 5}
\end{equation*}
for some constant $c < 1$ sufficiently close to 1. Then let the line set 

\begin{equation*}
    L = \qty{y=s(x-a)+b ; (a,b) \in A_{N/4}^2,s\in S}
\end{equation*}

Each point $(a,b)\in A_{N/4}^2 \subset P$ has at least $|S|$ lines of $L$ so $I(P,L)=\Omega\qty(|A_{N/4}^2| |S|)=\Omega\qty(N^2 |S|)$.

\begin{lemma}\label{th:slopes}
$|S|=\Theta\qty(M^2)$
\end{lemma}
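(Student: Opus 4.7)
The upper bound $|S| = O(M^2)$ is immediate, since $S$ is the image of at most $\Theta(M^2)$ valid quadruples $(p_1,p_2,q_1,q_2)$. The content is the matching lower bound. Writing $\pi = p_1+p_2\sqrt k$ and $\kappa = q_1+q_2\sqrt k$, my plan is to show that the map $\Phi(p_1,p_2,q_1,q_2) = \pi/\kappa$ is $O(1)$-to-one on a positive proportion of valid quadruples.

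If two valid quadruples $(\pi,\kappa)$ and $(\pi',\kappa')$ give the same slope, then $\mu := \pi'/\pi = \kappa'/\kappa \in \mathbb{Q}(\sqrt k)^\times$, so counting preimages amounts to counting admissible $\mu$. Writing $\mu = a + b\sqrt k$ with $a,b\in\mathbb{Q}$, it suffices to bound both the magnitudes and the denominators of $a$ and $b$. For the magnitudes, apply the two Galois embeddings $\sigma_1,\sigma_2$ of $\mathbb{Q}(\sqrt k)$: $|\sigma_i(\mu)| = |\sigma_i(\pi')|/|\sigma_i(\pi)|$. The numerators are trivially $O(\sqrt M)$ from the box. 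For the denominators a short Diophantine count handles the genericity: the pairs $(p_1,p_2)$ with $|p_1\pm p_2\sqrt k| = o(\sqrt M)$ lie in a thin strip around the lines $p_1 = \pm p_2\sqrt k$ (where $\sqrt k$ is irrational), and contribute only an arbitrarily small fraction of the box. On the generic complement, $|\sigma_i(\pi)|,|\sigma_i(\kappa)| = \Theta(\sqrt M)$, forcing $|\sigma_i(\mu)| = O(1)$ and therefore $|a|,|b| = O(1)$.

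For the denominators, the condition $\mu\pi\in\mathbb{Z}[\sqrt k]$ is a $2\times 2$ linear system in $(a,b)$ with matrix $\bigl(\begin{smallmatrix} p_1 & kp_2 \\ p_2 & p_1\end{smallmatrix}\bigr)$ of determinant $N(\pi) = p_1^2 - kp_2^2$; Cramer's rule forces $N(\pi)a, N(\pi)b\in\mathbb{Z}$. Applying the same argument to $\mu\kappa\in\mathbb{Z}[\sqrt k]$ gives $N(\kappa)a, N(\kappa)b\in\mathbb{Z}$. Hence the denominators of $a$ and $b$ (in lowest terms) divide $\gcd(N(\pi),N(\kappa)) \leq 5$. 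Combined with the size bound, this leaves $O(1)$ possibilities for $\mu$, and once $\mu$ is fixed so is $(\pi',\kappa') = (\mu\pi,\mu\kappa)$.

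To conclude $|S|=\Omega(M^2)$, it remains to show a positive proportion of quadruples simultaneously satisfy the two GCD conditions and the genericity condition. The GCD conditions hold on a positive proportion by a standard sieve: the probability that a prime $p>5$ divides $\gcd(N(\pi),N(\kappa))$ is $O(1/p^2)$, so by inclusion--exclusion a positive fraction of tuples escape all such primes (and similarly for $\gcd(p_1,p_2)$). The main obstacle is choosing the constant $c$ close enough to $1$ (and the Diophantine threshold small enough) that the genericity condition, the two GCD conditions, and the box constraint can all be imposed simultaneously on a set of size $\Theta(M^2)$; once this calibration is done, the linear-algebra argument above yields $O(1)$ preimages per slope and hence $|S| = \Theta(M^2)$.
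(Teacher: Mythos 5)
Your proof is correct in outline, and it takes a genuinely different route from the paper's. Both arguments reduce to the same sieve showing that $\Omega(M^2)$ quadruples survive the two gcd conditions; the difference is in how one passes from quadruples to \emph{distinct} slopes. The paper does this combinatorially: it encodes each quadruple as the planar point $(p_1+p_2\sqrt{k},\,q_1+q_2\sqrt{k})$, observes that no line carries more than $\sqrt{|S_P^+|}$ of these points, invokes Beck's theorem to find a single point $\alpha$ determining $\Omega(|S_P^+|)$ distinct directions with the others, and translates those directions to the origin. You instead bound the multiplicity of the ratio map directly: the quotient $\mu=\pi'/\pi=\kappa'/\kappa$ has both Galois conjugates of size $O(1)$ and denominator dividing $\gcd(N(\pi),N(\kappa))\leq 5$, hence $O(1)$ values, so each slope has $O(1)$ admissible representations. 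One simplification you can make: the thin-strip/genericity step and the final "calibration" worry are unnecessary, since for a non-square integer $k\geq 2$ one has $|p_1\pm p_2\sqrt{k}|\geq (c\sqrt{k}-1)\sqrt{M}=\Omega(\sqrt{M})$ for every point of the box once $c>1/\sqrt{2}$, so all admissible quadruples are automatically "generic" and the sieve is the only density argument needed. Comparing the two: your argument explains what the norm-gcd condition is actually doing at this stage (in the paper's proof of the lemma it is only something to be survived by the sieve, its real use coming later in the richness upper bound), it avoids the somewhat delicate claim that the difference $\beta-\alpha$ of two points of $S_P^+$ again lies in $S_P$ (the coordinate ranges and gcd constraints are not closed under subtraction), and it needs no incidence-geometric input; the paper's Beck-based argument, on the other hand, is insensitive to the algebraic structure of $\mathbb{Z}[\sqrt{k}]$ and would apply to more general grid-like configurations.
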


\begin{proof}
We represent the slopes in $S$ as points in the plane where the $x$-coordinate is the numerator and the $y$-coordinate is the denominator. Let $S_P=\Big\{ (p_1+p_2\sqrt{k},q_1+q_2\sqrt{k}) \in A_M^2 \text{ where } |p_i|,|q_i| \in \qty[c \sqrt{M},\sqrt{M}]$, $\gcd(p_1^2-kp^2,q_1^2-kq_2^2) \leq 5, \gcd(p_1,p_2) \leq 5 \Big\} \subset\RR^2$. Note the number of distinct elements in the slope set $S$ is equal to the number of lines determined by a point in $S_P$ and the origin. Let $S_P^+ \subset S_P$ be the subset of elements where $p_i,q_i \geq 0$, then $|S_P|=16|S_P^+|$. 

No line can contain more than $\sqrt{|S_P^+|}$ points of $S_P^+$ since each line contains at most one point per row/column. Thus we must be in the second case of Theorem \ref{th:Beck}: there exists a constant $K$ such that at least $|S_P^+|^2/K$ distinct lines are determined by points in $S_P^+$. So there exists $\alpha=(\alpha_x,\alpha_y) \in S_P^+$ such that the set $L_\alpha$ of lines determined by $\alpha$ and another point in $S_P^+$ satisfies $|L_\alpha| \geq |S_P^+|/K$. 

For all $l \in L_\alpha$ there exists a point $\beta = (\beta_x,\beta_y)  \in S_P^+ \backslash \{\alpha \}$ such that $\beta \in l$. Letting $l : y-\alpha_y= s(x -\alpha_x) $ we must have $(\beta_y - \alpha_y) = s(\beta_x -\alpha_x) $. $\alpha, \beta \in S_P^+$ so $\beta -\alpha \in S_P$. Thus the line of slope $s$ going through the origin also contains the point $\beta - \alpha \in S_P$. Finally all the lines in $L_\alpha$ are concurrent and distinct so none of them have the same slope. Thus $|S| \geq |L_\alpha|\geq |S_P^+|/K $. 

To find a lower bound on $S_P^+$, we must remove the quadruples $(p_1,p_2,q_1,q_2)$ that do not satisfy the divisibility requirements. The number of quadruples $(p_1,p_2,q_1,q_2) \in \qty[c \sqrt{M},\sqrt{M}]$ such that $\gcd(p_1^2-kp_2^2,q_1^2-kq_2^2) \leq 5$ and $\gcd(p_1,p_2) \leq 5$ is equal to $(1-c)^2 M^2$ minus the number of quadruples such that $d \mid p_1^2-kp_2^2$ and $d \mid q_1^2-kq_2^2$ for some odd prime $d > 5$ and the number of quadruples where $d \mid p_1$ and $d \mid p_2$ for some odd prime $d > 5$. 

We first count the number of quadruples such that $d \mid p_1^2-kp_2^2$ and $d \mid q_1^2-kq_2^2$. If $kp_2^2$ is a quadratic residue mod $d$ (prime) then $\FF_d$ is a field so the degree 2 equation for $p_1$ in $\FF_d: p_1^2=kp_2^2 \mod d$ has at most two solutions. Likewise for $q_1$, so we must remove at most $( (1-c)\sqrt{M} \cdot 2(1-c)\sqrt{M}/d)^2 = \frac{4(1-c)^2 M^2}{d^2}$ quadruples $(p_1,p_2,q_1,q_2)$. Furthermore the number of quadruples $(p_1,p_2,q_1,q_2)$ such that $d \mid p_1$ and $d \mid p_2$ for some odd prime $d$ which we must remove is upperbounded by $((1-c)\sqrt{M})^2 \cdot ((1-c)\sqrt{M}/d)^2 = \frac{(1-c)^4 M^2}{d^2}$. 

So for each $d$ the number of quadruples we must remove is upperbounded by $\frac{4(1-c)^2 M^2}{d^2} + \frac{(1-c)^4 M^2}{d^2} < \frac{5(1-c)^2 M^2}{d^2}$. Summing over all odd primes $d>5$ we must remove at most $5(1-c)^2 M^2 \sum_{d\geq 7 : \text{ prime}} d^{-2}< 5(1-c)^2 M^2$ \(\int_{6}^{\infty} 1/x^2 \,dx\) $=\frac{5}{6} (1-c)^2 M^2$. 

So $|S_P^+| \geq (1-c)^2 M^2 - \frac{5}{6} (1-c)^2 M^2 = \Omega(M^2)$. Thus $|S| \geq |S_P^+|/K = \Omega(M^2)$. Furthermore, $|S| \leq M^2$ so $|S|=\Theta\qty(M^2)$
\end{proof}

Therefore $I(P,L)= \Omega\qty(N^2 M^2)$.

\begin{lemma}
    Each line in $L$ has $\Theta\qty(\frac{N}{M})$ points in $P$.
\end{lemma}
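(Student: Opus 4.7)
The plan is to count points $(x,y) \in A_N^2$ on each line $\ell: y = s(x-a)+b$ by reducing to a lattice point count in a planar region. Write $s = p/q$ with $p = p_1 + p_2\sqrt k$ and $q = q_1 + q_2\sqrt k$, and set $D = q_1^2 - kq_2^2$, $E = p_1^2 - kp_2^2$. Since $c$ is chosen sufficiently close to $1$ in the definition of $S$, the hypothesis $|p_i|, |q_i| \in [c\sqrt M, \sqrt M]$ keeps $(p_1, p_2)$ and $(q_1, q_2)$ bounded away from the Pell hyperbola $x_1 = \pm\sqrt k \, x_2$, forcing $|D|, |E| = \Theta(M)$ uniformly over $S$. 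Writing $\xi = x - a$ and $\eta = y - b$, a point $(x,y) \in A_N^2$ lies on $\ell$ iff $p\xi = q\eta$ in $\ZZ[\sqrt k]$ while $\xi + a, \eta + b \in A_N$.

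First I would use the gcd conditions defining $S$ to parametrize the $\ZZ[\sqrt k]$-solutions of $p\xi = q\eta$. Let $\mathfrak g = (p,q)$ be the ideal greatest common divisor; its norm divides $\gcd(N(p), N(q)) = \gcd(E, D) \leq 5$, so $N(\mathfrak g) = O(1)$. In the principal case (and similarly up to a bounded factor otherwise), writing $p = \mathfrak g p'$ and $q = \mathfrak g q'$ with $(p', q') = (1)$, the solutions are $(\xi, \eta) = (q't, p't)$ for $t \in \ZZ[\sqrt k]$, with $N(q') = |D|/N(\mathfrak g) = \Theta(M)$ and similarly $N(p') = \Theta(M)$.

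Next I would translate the box constraints $\xi + a, \eta + b \in A_N$ into two pairs of linear inequalities on $(t_1, t_2) \in \ZZ^2$. Multiplication by $q'$ (resp.\ $p'$) on $\RR[\sqrt k] \cong \RR^2$ is a linear map of determinant $N(q') = \Theta(M)$, so each pair cuts out a parallelogram $P_{q'}$ (resp.\ $P_{p'}$) in $t$-space of area $\Theta(N/M) = \Theta(r)$. A convenient observation for $k > 0$ is that every element of $\RR[\sqrt k]$ acts diagonally in the common eigenbasis $\{(\sqrt k, 1), (-\sqrt k, 1)\}$, so $P_{q'}$ and $P_{p'}$ share principal axes and the area of their intersection is computable by an explicit change of variables.

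Finally, a Gauss-type lattice count yields $\Theta(r)$ integer points in $P_{q'} \cap P_{p'}$, giving the desired $\Theta(N/M)$ bound (with the perimeter error term absorbed since $r$ is large). The main obstacle I anticipate is the uniform lower bound on the intersection area: degenerate configurations where $P_{q'}$ and $P_{p'}$ barely overlap must be ruled out. The key input here is that $(a,b) \in A_{N/4}^2$ has coordinates of size at most $\sqrt N/2$, so both boxes $A_N - a$ and $A_N - b$ contain a centered box of side $\sqrt N$; this forces both parallelograms to contain a common neighborhood of the origin in $t$-space of area $\Theta(r)$, giving the required lower bound uniformly across all lines.
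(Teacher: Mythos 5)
Your strategy is sound and it isolates the same three inputs that drive the paper's proof --- the two gcd conditions in the definition of $S$, the range $|p_i|,|q_i|\in[c\sqrt M,\sqrt M]$, and the restriction $(a,b)\in A_{N/4}^2$ --- but it packages them differently. The paper's lower bound is exactly your parametrization restricted to $t\in(q)$ (it sets $x-a=(q_1+q_2\sqrt k)(a_1+a_2\sqrt k)$ and checks coefficients land in range), while its upper bound is a hands-on computation: rationalize $x=a+(y-b)(q_1+q_2\sqrt k)/(p_1+p_2\sqrt k)$, use $\gcd(p_1^2-kp_2^2,q_1^2-kq_2^2)\le 5$ to force $p_1^2-kp_2^2\mid 30(Y_1p_1-kY_2p_2)$, and count the admissible coordinates $(Y_1,Y_2)$ of $y-b$ through explicit congruences. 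Your reformulation --- the solutions of $p\xi=q\eta$ form a rank-two lattice of covolume $\Theta(M)$, to be intersected with two parallelograms simultaneously diagonalized by the conjugation eigenbasis --- is a genuinely different and arguably more transparent route, and it makes visible \emph{why} the intersection cannot degenerate: $|D|=|q_1+\sqrt k q_2|\cdot|q_1-\sqrt k q_2|=\Theta(M)$ together with the trivial bound $|q_1\pm\sqrt k q_2|=O(\sqrt M)$ forces \emph{each} conjugate to be $\Theta(\sqrt M)$, so both parallelograms are essentially squares of side $\Theta(\sqrt{N/M})$ about the origin. You should spell out that last deduction, since sharing principal axes alone would not prevent two area-$\Theta(r)$ parallelograms from having a tiny intersection.

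The one step that does not go through as written is the factorization $p=\mathfrak g p'$, $q=\mathfrak g q'$ with solution set $(\xi,\eta)=(q't,p't)$. The ring $\mathbb{Z}[\sqrt k]$ is in general not a PID, and for non-squarefree $k$ it is not even a Dedekind domain, so $\mathfrak g=(p,q)$ need not be principal, the ``quotients'' $p',q'$ need not be elements, and ``similarly up to a bounded factor otherwise'' is not yet an argument; moreover a generator of a bounded-norm principal ideal can have wildly unbalanced conjugates (infinitely many units), which would distort your parallelograms. The repair is to bypass the factorization: work with the lattice $\Lambda=\{\xi\in\mathbb{Z}[\sqrt k]: q\mid p\xi\}$. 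Since $\Lambda/(q)$ is the kernel of multiplication by $p$ on $\mathbb{Z}[\sqrt k]/(q)$, whose size equals that of the cokernel $\mathbb{Z}[\sqrt k]/(p,q)$, which is at most $\gcd(|D|,|E|)\le5$, one gets $[\mathbb{Z}^2:\Lambda]\ge|D|/5=\Omega(M)$; and $D\mid E\,N(\xi)$ gives $|N(\xi)|\ge|D|/5$, hence $\lambda_1(\Lambda)=\Omega(\sqrt M)$, which controls the boundary term in the Gauss count uniformly (your ``$r$ large'' caveat then disappears; for bounded $r$ the lower bound is trivial anyway since each line contains $(a,b)\in\pts$). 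With these repairs your argument is complete.
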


\begin{proof}
Each line is $\Omega\qty(\frac{N}{M})$ rich: Let $l$ be an arbitrary line in $L$. There exist $s=\frac{p_1+p_1\sqrt{k}}{q_1+q_2\sqrt{k}} \in S$ and $(a,b) \in A_{N/4}^2$ such that $l$ is the line $y-b = s(x-a)$. Then for all $x = a + (q_1+\sqrt{k}q_2)(a_1+\sqrt{k}a_2)$ where $a_i \in \qty[0,\frac{\min_{i\in \{1,2 \}} \sqrt{N}-|b_i|}{(k+1)\sqrt{M}}]$ : $s(x-a)=(p_1+\sqrt{k}p_2)(a_1+\sqrt{k}a_2) = (p_1a_1+kp_2a_2) + (p_1a_2 + p_2a_1)\sqrt{k}$ where each of the linearly independent terms have integer coefficients in the range $\max_{i \in {1,2}} \qty[ -\sqrt{N} - b_i,\sqrt{N}-b_i]$. So  for all $\Omega\qty(\qty(\frac{\sqrt{N}}{\sqrt{M}})^2)$ choices of $a_1 ,a_2$ there exists $y \in A_N$ such that $y-b = s(x-a)$. Thus each line in $L$ has $\Omega\qty(\frac{N}{M})$ points in $P$.

Each line in $L$ has $O\qty(\frac{N}{M})$ points in $P$:
Let $(x,y) \in  A_N^2 = P$ such that $(x,y) \in l$ for some line $l : y = \frac{p_1+p_2\sqrt{k}}{q_1+q_2\sqrt{k}}(x-a) + b \in L$. Letting $Y_1 + Y_2\sqrt{k}= y - b$ this is equivalent to $x = \frac{(Y_1+Y_2\sqrt{k})(q_1+q_2\sqrt{k})}{p_1+p_2\sqrt{k}} + a$.

\begin{equation*}
    \implies x = a + \frac{(Y_1q_1p_1-kY_1q_2p_2-kY_2q_1p_2+kY_2q_2p_1)+(Y_2q_1p_1+Y_1q_2p_1-Y_1q_1p_2-kY_2q_2p_2)\sqrt{k}}{p_1^2-kp_2^2}
\end{equation*}

\begin{equation*}
    x \in A_N \implies \begin{cases}
    p_1^2 - kp_2^2 |q_1(Y_1p_1-kY_2p_2)+kq_2(Y_2p_1-Y_1p_2)\\
    p_1^2 - kp_2^2 |q_2(Y_1p_1-kY_2p_2)+q_1(Y_2p_1-Y_1p_2)
    \end{cases}
\end{equation*}

\begin{equation*}
    \implies
    p_1^2 - kp_2^2 | (q_1^2-kq_2^2)(Y_1p_1-kY_2p_2)
\end{equation*}

$\frac{p_1+p_2\sqrt{k}}{q_1+q_2\sqrt{k}} \in S$ so $\gcd(p_1^2+k p_2^2,q_1^2+kq_2^2) \leq 5$ Thus $p_1^2-kp_2^2 | 30(Y_1p_1-kY_2p_2)$. $Y_1,Y_2 \in \qty[-\sqrt{N},\sqrt{N}]$ and $|p_i| \in \qty[c\sqrt{M},\sqrt{M}]$ where $c<1$ is a constant that we choose to be sufficiently close to 1. Then $|p_1^2-kp_2^2|>|M-k(1-c)^2 M| = \Omega(M)$. So $ |\frac{Y_1p_1-kY_2p_2}{p_1^2-kp_2^2}| =O \qty( \sqrt{\frac{N}{M}})$. So $p_1^2 - kp_2^2 | 30(Y_1p_1-kY_2p_2)$ if and only if there exists and integer $|j|=O\qty(\sqrt{\frac{N}{M}})$ such that $j(p_1^2-kp_2^2)=30(Y_1p_1-kY_2p_2)$

\begin{equation*}
    \implies \begin{cases}
    Y_1=jp_1/30+\frac{kp_2(Y_2-p_2)}{30 p_1}\\
    Y_2=jp_2/30+\frac{p_1(Y_1-p_1)}{30 k p_2}
    \end{cases} \implies \begin{cases}
    p_1 | k( Y_2-p_2)\\
    kp_2 | Y_1-p_1
    \end{cases}
\end{equation*}

Since $\gcd(p_1,p_2) \leq 5$ and $Y_i, p_i$ are integers. Also $|p_i| \geq c\sqrt{M}$ and $Y_1 \equiv p_1 \mod{kp_2}$ and $Y_1 \in \qty[-\sqrt{N},\sqrt{N}]$ so there are $O(\sqrt{\frac{N}{M}})$ choices for $Y_1$. Plugging $Y_1$ into the system of equations above uniquely defines $Y_2$ so there are $O(\sqrt{\frac{N}{M}})$ choices of $(Y_1,Y_2)$ for each $j$. There are $O(\sqrt{\frac{N}{M}})$ choices of $j$ so each line is $O(\sqrt{\frac{N}{M}}\cdot\sqrt{\frac{N}{M}})=O(\frac{N}{M})$ rich.

\end{proof}

Each line in $L$ has $\Theta\qty(\frac{N}{M})$ points in $P$ so $I(P,L) = \Theta\qty(|L| \frac{N}{M})$. Combining with $I(P,L)= \Theta\qty(N^2 M^2)$ we obtain $|L| = \Theta\qty( N M^3)$.

The Szemerédi-Trotter bound for point set $P$ states that the number of $\frac{N}{M}$ rich lines is $O\big{(} \frac{(N^2)^2}{(N/M)^3} + \frac{N^2}{N/M} \big{)} = O\big{(} N M^3 \big{)}$. So we have achieved the Szemerédi-Trotter upper bound for any richness.
\end{proof}

\section{Applications} \label{sec:applications}

In this section we prove Lemma \ref{th:LM_lattice} and Corollary \ref{th:LM_GAP}, two sharp examples of the inverse discrete Loomis-Whitney problem in the plane. We use the family of constructions from Theorem \ref{th:construction} to show two lemmas bounding additive and multiplicative energies \cite{MRS13} are sharp.  We first recall the statements.
\vspace{2mm}

\noindent {\bf Corollary \ref{th:LM_GAP}.}$\qquad$\\
\emph{For any non-square integer $k$, any large enough $N$, let the point set $\pts=A_N^2$ where $A_N = \qty{ x_1 + x_2 \sqrt{k} ; x_1,x_2 \in \qty[-\sqrt{N},\sqrt{N}]}$. Then for any constant $p=O(1)$ there is a set of projections $\big\{ \pi_i \}_{i=1}^{\Theta(p)}$ such that $|\pi_i(\pts) | = \Theta \qty( \sqrt{p} n)$.
}

\begin{proof}
We see $\pts$ as embedded in the larger point set $\pts' = A_{4N}^2$. Letting $p=M^2$, we construct the set of $\frac{n}{\sqrt{p}}-$rich lines on $\pts'$ from the proof of Theorem \ref{th:construction}. These belong to $|S|$ many families of parallel lines each of size $\Theta(\sqrt{p}n)$, such that every point in $\pts$ is in exactly one line from every family. The size of the slope set is $|S|=\Theta(M^2)=\Theta(p)$. Letting $S$ be the projection directions, the size of each projection is equal to the number of lines in each family $= \Theta(\sqrt{p}n)$ .
\end{proof}

\noindent {\bf Lemma \ref{th:LM_lattice}.}$\qquad$\\
\emph{Let the point set $\pts$ be a section of the integer lattice of size $n \times n$. A one-dimensional projection  of $\pts$ has size $\Theta(n)$ if and only if the slope of the projection direction is an irreducible rational $p/q$ such that $p,q=O(1)$.
}

\begin{proof}
Any line whose slope is non-rational will go through at most a single point, so the projection of the point set along this direction will have maximal size of $n^2$. So projections of size $O(n)$ can only exist along rational projection directions. 

Furthermore we know from the proof of \ref{th:LM_GAP}, taking the case where $k$ is a square, so the point set reduces to the case of a square lattice, that if $p,q=O(1)$ then the projection along the slope $p/q$ has size $\Theta(n)$. 

If $p = \omega(1)$, $y=\frac{p}{q} \cdot x \in [0,n] \implies x/q = o(n)$. Similarly, if $q = \omega(1)$, $y=\frac{p}{q} \cdot x \in [0,n] \implies y/q = o(n)$. In either case there are asymptotically less than $n$ points of the lattice on each line of slope $\frac{p}{q}$. So the projection along $\frac{p}{q}$ has size $\omega(n)$.

\end{proof}

\noindent {\bf Lemma \ref{th:energy_ex}.}$\qquad$\\
\emph{For any non-square constant $k$, and $M \leq N$ let 
\\$A_N = \qty{ x_1 + x_2 \sqrt{k} ; x_1,x_2 \in \qty[-\sqrt{N},\sqrt{N}]}$ and let $X=S \subset \frac{A_N}{A_N}$ be the slope set from the proof of Theorem \ref{th:construction}. Then $|X| \leq |A_N|^2$ and
\[ \sum_{x \in X}  E^+(A_N,xA_N) = \Theta\qty( |A_N|^3 |X|^{1/2}). \]
}

\begin{proof}
We consider the dual situation of the proof of Lemma 2.3 \cite{MRS13}. I let the point set be $\pts = A_N^2$ and the line set to be as in the construction in the proof of Theorem \ref{th:construction}. Then  $\sum_{x \in X} E^+(A_N,xA_N) = \sum_{x \in X} \sum_{y} r_{A+Bx}^2 (y) = \Theta\qty(\sum_{\text{lines}} (\frac{N}{M})^2)$  since each line in the construction is $\frac{N}{M}$ rich. Furthermore there are $\Theta\qty(N\cdot M^3)$ lines in the construction so $\sum_{x \in X} E^+(A_N,xA_N) = \Theta(N^3\cdot M)$. Finally $|A_N|=N$ and $|X|=|S|=\Theta(M^2)$ \ref{th:slopes} so I have shown $\sum_{x \in X}  E^+(A_N,xA_N) = \Theta\qty( |A_N|^3 |X|^{1/2}).$
\end{proof}

\bibliographystyle{plain}
\bibliography{bibliography}

\end{document}